\newcommand{\ZZ}{\mathbf Z}
\newcommand{\QQ}{\mathbf Q}
\newtheoremstyle%
{custom}%
{}
{}
{}
{}
{}
{.}
{ }
{\thmname{}
\thmnumber{}%
\thmnote{\bfseries #3}}%
\newtheoremstyle%
{Theorem}%
{}%
{}%
{\itshape}%
{}%
{}%
{.}%
{ }%
{\thmname{\bfseries #1}%
\thmnumber{\;\bfseries #2}%
\thmnote{\;(\bfseries #3)}}%
\theoremstyle{Theorem}
\newtheorem{thm}{Theorem}[section]
\newtheorem{cor}[thm]{Corollary}
\newtheorem{lem}[thm]{Lemma}
\newtheorem*{mainthm}{Main Theorem}
\newtheorem*{maincor}{Corollary}
\theoremstyle{definition}
\newtheorem{dff}[thm]{Definition}
\newtheorem{xmp}[thm]{Example}
\newtheorem{rmk}[thm]{Remark}
\def\mbf{\mathbf}
\def\W{\mathbf{W}}
\def\tn{\textnormal}
\def\mcal{\mathcal}
\def\goth{\mathfrak}
\def\Map{\textnormal{Map}}
\title{A Witt--Burnside ring attached to a pro-dihedral group}
\author{Lance Edward Miller }
\address{ Department of Mathematics, University of Utah, Salt Lake City, UT, 84103}
\email{lmiller@math.utah.edu}
\thanks{The author was partially supported by a National Science Foundation VIGRE Grant, \# 0602219.}
\keywords{Witt vectors, Witt-Burnside rings, profinite, pro-dihedral}
\date{}
\begin{document}

\maketitle

\begin{abstract}
The ring of $p$-typical Witt vectors are an indispensable tool in number theory and mixed characteristic commutative algebra. Witt vectors were significantly generalized by Dress and Siebeneicher by producing for any profinite group G, a ring valued functor $\W_G$, The $p$-typical Witt vectors are recovered as the example $G = \ZZ_p$. This article explores the structure of the ring $W_{D_{2^\infty}}(k)$ where $k$ is a field of characteristic $2$ and $D_{2^\infty} := \varprojlim D_{2^n}$.
\end{abstract}

\section{Introduction}

In this article, we examine the algebraic structure of a Witt-Burnside ring $\W_G(k)$ where $G$ is a non-abelian pro-$2$ group and $k$ is a field of characteristic $2$. The Witt-Burnside functor attached to a profinite group $G$, introduced by Dress and Siebeneicher \cite{DS88}, is an endofunctor of the category of commutative rings that generalizes the usual Witt vector construction. We denote this functor by $\W_G$ and rings of the form $\W_G(R)$ where $R$ is a commutative ring are called Witt-Burnside rings. The $p$-typical Witt vectors are the Witt-Burnside rings attached to the additive group $G = \ZZ_p$ and Cartier's `big' Witt vectors are the Witt-Burnside rings attached to $\widehat{\ZZ}$. The name Burnside is used because the functor $\W_G$ applied to the integers is the burnside ring of the group $G$. 

The importance of $p$-typical Witt vectors in algebra and number theory cannot be understated and their applications are too numerous to give a complete account. We refer the reader to the extensive survey by Hazewinkle \cite{Haz78} on the applications of both $p$-typical and Cartier's `big' Witt vectors.

Further applications of the more general Witt-Burnside rings are difficult to determine without a careful understanding of the structure of the rings in question and their universal properties. J. Elliott gave a universal description \cite{Ell06} unifying different constructions of Witt-Burnside rings and computed the Frobenius and Verschibung maps. Work by the author as well as Y.T. Oh has begun to clarify the structure of these rings \cite{Mil,Oh07,Oh09}, but to date no detailed non-abelian examples appear in the literature. This article fills that gap. 

Like the $p$-typical Witt vectors, for any field $k$, the Witt-Burnside ring $\W_G(k)$ is always a local ring \cite[Cor. 3.21]{Mil}, and when $G$ is an infinite pro-$p$ group $\W_G(k)$ has characteristic $0$ when $k$ is a field of characteristic $p$ \cite[Cor 2, pg 115-116]{DS88}. Previous work by the author examined the algebraic structure of $\W_G(k)$ where $G \cong \ZZ_p^d$ for $d > 1$ and $k$ is a field of positive characteristic, but not necessarily perfect \cite{Mil}. This comparison showed that $\W_{\ZZ_p^d}(k)$ for $d > 1$ lacks many of the nice properties enjoyed by the $d=1$ case where one recovers the $p$-typical Witt vectors, i.e., a $p$-adically complete DVR. In particular, it is noted that unlike the $p$-typical Witt vectors, $\W_{\ZZ_p^d}(k)$ is not a noetherian ring and not even coherent\footnote{Coherent rings are those for which finitely generated ideals are finitely presented. See \cite{Gla89} for more details on coherent rings.} when $d = 2$ \cite[Cor. 5.21]{Mil}. In fact, even the unique maximal ideal fails to be finitely generated, though in the $d = 2$ case, $\W_{\ZZ_p^2}(k)$ is reduced local ring of characteristic $0$ \cite[Thm. 6.12]{Mil}, an algebraic situation similar to rings of functions. Whether or not $\W_{\ZZ_p^d}(k)$ is reduced   for $d > 2$ remains an open question. 

This lack of similarity between the general and $p$-typical cases adds to the mystery of the images of these functors and motivates the need for more examples. It is in this spirit that the author has studied the structure of the non-abelian pro-$2$ group $D_{2^\infty} = \left\{ \left( \begin{smallmatrix} \pm 1 & a \\ 0 & 1 \end{smallmatrix} \right) : a \in \ZZ_2 \right\} = \ZZ_2 \rtimes \{ \pm 1\}$ and $k$ a field of characteristic $2$. 
From general theory recorded in \cite{Mil}, $\W_{D_2^\infty}(k)$ is known to be a local ring, complete in the profinite topology. Our main theorem concerns finite generation. 

\begin{mainthm}
\label{thm:MainTheorem} \textnormal{[Thm. \ref{thm:D2inftynotnoth}, Cor. \ref{cor:notcoh}]}
For $k$ a field of characteristic $2$, the ring $\W_{D_{2^\infty}}(k)$ is not coherent. In particular, it is not noetherian.  
\end{mainthm}

We also examine the nilpotent elements of $\W_{D_{2^\infty}}(k)$ where $k$ has characteristic $2$, and in contrast to $\W_{\ZZ_p^2}(k)$ for $k$ of characteristic $p$, which is reduced \cite[Theorem 6.12]{Mil}, there are nilpotent elements in $\W_{D_{2^\infty}}(k)$. 

\begin{maincor} \textnormal{[Cor. \ref{cor:nonabelnonred}]}
For $k$ a field of characteristic $2$, the ring $\W_{D_{2^\infty}}(k)$ is not reduced. 
\end{maincor}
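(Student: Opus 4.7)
I intend to exhibit an explicit nonzero element $x \in \W_{D_{2^\infty}}(k)$ with $x^n = 0$ for some integer $n$, by constructing it at finite levels and passing to the inverse limit. Since $\W_{D_{2^\infty}}(k) = \varprojlim_m \W_{D_{2^m}}(k)$, it suffices to produce a compatible family of nonzero nilpotents $x_m \in \W_{G_m}(k)$, where $G_m := D_{2^m}$, stable under the truncation maps $\W_{G_{m+1}}(k)\to\W_{G_m}(k)$.

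\noindent\textbf{Candidate element.} The construction would exploit the splitting of the order-$2$ reflection subgroups of $G_m$ (for $m\ge 3$) into two non-conjugate classes with representatives $U_1=\langle s\rangle$ and $U_2=\langle rs\rangle$. In the Dress--Siebeneicher basis $\{\tau_{[U]}\}$ of $\W_{G_m}(k)$ indexed by conjugacy classes of open subgroups \cite{DS88,Ell06}, the double-coset formula yields, modulo Witt-vector corrections,
\[
\tau_{[U_i]}^{\,2}\;=\;2\,\tau_{[U_i]}\;+\;(2^{m-2}-1)\,\tau_{[\{1\}]},\qquad \tau_{[U_1]}\,\tau_{[U_2]}\;=\;2^{m-2}\,\tau_{[\{1\}]}.
\]
These formulas follow from a direct analysis of $U_i\backslash G_m/U_i$ (two ``normalizer'' double cosets of size $2$ contributing $\tau_{[U_i]}$ each, and $2^{m-2}-1$ cosets of size $4$ with trivial intersection) and of $U_1\backslash G_m/U_2$ (all $2^{m-2}$ double cosets have trivial intersection, since $[U_1]\ne [U_2]$). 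In characteristic $2$, the parity imbalance between the odd coefficient $2^{m-2}-1$ and the even coefficient $2^{m-2}$ is exactly what drives the nilpotency. The candidate $x_m$ would take the form $\tau_{[U_1]}(a)-\tau_{[U_2]}(a)$ for a suitable $a\in k^{\times}$, adjusted by additional Teichm\"uller or Verschiebung terms supported in the rotation subgroup $\ZZ_2\triangleleft G_m$ so as to cancel the residual $2$-divisible part of the square. Non-vanishing of $x_m$ is immediate from the linear independence of the $\tau_{[U]}$ attached to distinct conjugacy classes.

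\noindent\textbf{Main obstacle.} The principal difficulty is that the nilpotent cannot come from the combinatorial Burnside ring alone: $A(G_m)$ embeds into a product of copies of $\ZZ$ via the table of marks, hence is reduced, so every $\ZZ$-linear combination of the $\tau_{[U]}$ above has square divisible by $2$ but not identically zero. The nilpotent in $\W_{G_m}(k)$ must therefore be sustained by the Witt-vector correction terms that deform the Burnside product, and one must show that these corrections--governed by Elliott's universal Witt--Burnside polynomials \cite{Ell06} restricted to reflection subgroups of $G_m$--conspire in characteristic $2$ to cancel the leading $2$-divisible residue. This is genuinely delicate because $2\in\W_{G_m}(k)$ is itself not nilpotent (the ring has characteristic $0$), so $2$-adic smallness of $x_m^{2^k}$ is insufficient; one needs an \emph{exact} algebraic identity $x_m^n=0$. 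Once this identity is secured at a single level, compatibility under the inverse system is routine, as both the conjugacy classes of reflections and their normalizers transform naturally under the surjections $G_{m+1}\twoheadrightarrow G_m$, yielding the desired nilpotent in $\W_{D_{2^\infty}}(k)$.
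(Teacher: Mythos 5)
Your proposal does not yet contain a proof: the step you yourself label the ``principal difficulty'' is exactly the content of the theorem. The double-coset formulas you quote are identities in the Burnside ring $A(G_m)=\W_{G_m}(\ZZ)$, which (as you note) is reduced, and $\W_{G_m}(k)$ is not a module spanned by classes $\tau_{[U]}$ -- its addition and multiplication are given coordinatewise by the Witt polynomials $S_T,M_T$, so neither the asserted products ``modulo Witt-vector corrections'' nor the nonvanishing ``by linear independence of the $\tau_{[U]}$'' is meaningful without further argument. The candidate element is never pinned down (it is to be ``adjusted by additional Teichm\"uller or Verschiebung terms \ldots so as to cancel the residual $2$-divisible part''), and no identity $x_m^n=0$ is established. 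A second, independent gap is the claim that passing to the inverse limit is routine: having nonzero nilpotents at every finite level does not produce a nilpotent in $\varprojlim_m \W_{G_m}(k)$, because the nilpotency exponent can grow with $m$. The $p$-typical case is the standard warning: each truncated ring $\W_{\ZZ/2^n}(\FF_2)\cong \ZZ/2^n\ZZ$ has nilpotents, yet the limit $\ZZ_2$ is reduced. So you would need a single exponent valid at all levels, i.e.\ in effect an identity in the full ring $\W_{D_{2^\infty}}(k)$ -- which is what must be proved in the first place.

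For comparison, the paper avoids finite levels entirely. It writes down one explicit vector $\mbf{y}\in\W_{D_{2^\infty}}(k)$, with coordinate $1$ exactly at $U_1$ and at every normal $G$-set $T_n$ and $0$ elsewhere, and proves (Theorem \ref{thm:D2Ann}) that $\operatorname{Ann}(\mbf{y})=\goth{m}$: one lifts to $\W_G(\ZZ[\underline{X}])$, where the ghost components determine the Witt vector, checks coordinate-by-coordinate congruences modulo $2$ using the explicit Witt polynomials $W_{U_n}, W_{U_n'}, W_{T_n}$ (the key point being that $W_{T_n}(\mbf{y})=2+\sum_{i=1}^{n}2^i=2^{n+1}$ is highly divisible, while $W_{U_n'}(\mbf{y})=0$), and then transfers the conclusion to characteristic $2$ by functoriality. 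Since $y_0=0$, $\mbf{y}$ lies in $\goth{m}=\operatorname{Ann}(\mbf{y})$, so $\mbf{y}^2=\zero$ while $\mbf{y}\neq\zero$. Your intuition that a parity imbalance between the reflection-type coordinates drives the nilpotency is in the right spirit, but to make it a proof you need exact Witt-polynomial congruences of this kind, not Burnside-ring marks plus unspecified corrections.
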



The rest of the paper is organized as follows. Section \ref{sec:WB} outlines the basic theory of Witt-Burnside rings and Section \ref{sec:D2} discusses the structure of the profinite group $D_{2^\infty}$. The main technical work on the structure is in Section \ref{sec:main}. 

The author would like to thank the reviewers for their careful and thorough review and their many good suggestions which greatly improved the accuracy and presentation of the article. He would also like to thank Keith Conrad for many helpful discussions. 

\section{A review of Witt-Burnside rings}\label{sec:WB}


This section is a brief review of the Witt vector construction, but the reader is referred to \cite{DS88, Ell06, Mil} for more details. Like $p$-typical Witt vectors, Witt-Burnside rings are constructed utilizing generalized Witt polynomials associated to a profinite group $G$. The index set of these generalized polynomials is the set of isomorphism classes of discrete transitive $G$-sets, called the {\it{frame}} of $G$ and is denoted $\mcal{F}(G)$. For example, $\mcal{F}(\ZZ_p) = \mbf{N}$. The frame $\mcal{F}(G)$ is a partially ordered set. For $T$ and $U$ in $\mcal{F}(G)$ one has $U \leq T$ when there is a $G$-map from $T$ to $U$. Denote the set of all maps from $T$ to $U$ as $\Map_G(T,U)$ and the number of maps $\# \Map_G(T,U)$ by $\varphi_T(U)$. Thus $\varphi_T(U) \neq 0$ if and only if $T \leq U$. The following facts about $\mcal{F}(G)$ are easy to verify. 

\begin{enumerate}
\item For $T$ and $U$ in $\mcal{F}(G)$ with $U \leq T$, $\# U$ divides $\# T$ and $\# T / \# U$ represents the size of any of the fibers of any element of $\Map_G(T,U)$. 
\item If $T$ has a normal stabilizer then $\varphi_T(U) = \# U$ for $U \leq T$. 
\item For each $T$ in $\mcal{F}(G)$, there are only finitely many $U$ in $\mcal{F}(G)$ with $U \leq T$. 
\end{enumerate}


For $T \in \mcal{F}(G)$, 
define the $T$-th {\it Witt polynomial} to be 
\begin{equation}\label{Wdef}
W_T(\{X_U\}_{U \in \mcal{F}(G)}) = \sum\limits_{U \leq T} \varphi_T(U) X_U^{\#T / \#U} = X_0^{\#T} + \ldots + \varphi_T(T) X_T,
\end{equation}
where $0$ denotes the trivial $G$-set $G/G$. (Trivially $\varphi_T(0) = 1$.) 
This is a finite sum since there are only finitely many 
$U \leq T$.

Applying this for $G = \mbf{Z}_p$ one recovers the usual $n$-th Witt polynomial, though on a different index set. In Section \ref{sec:D2}, we describe the frame of $D_{2^\infty}$ as well as the Witt polynomials. The next theorem is fundamental to the construction of $\W_G$; see \cite[Lem. 2.1]{Ell06} and \cite[Thm 2.1]{Mil} for more details. 

\begin{thm}\label{invertthm}
Let $A$ be a commutative ring in which no nonzero element 
is killed under multiplication by $\varphi_T(T)$ for any $T \in \mcal{F}(G)$. 
Then the function
$\prod_{T \in \mcal{F}(G)} A \rightarrow 
\prod_{T \in \mcal{F}(G)} A$ given by 
$\mbf{a} \mapsto (W_T(\mbf{a}))_{T \in \mcal{F}(G)}$ is injective. This function is bijective provided each $\varphi_T(T)$ is invertible in $A$. 
\end{thm}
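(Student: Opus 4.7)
The plan is to exploit the triangular structure of the Witt polynomial system with respect to the partial order on $\mcal{F}(G)$. Note that
$$W_T(\mbf{a}) = \sum_{U \leq T} \varphi_T(U) a_U^{\#T/\#U}$$
involves only coordinates $a_U$ with $U \leq T$, and the coefficient of the ``top'' variable $a_T$ is exactly $\varphi_T(T)$. Since $\{U : U \leq T\}$ is finite by hypothesis (3) on the frame, and $\#U$ is a proper divisor of $\#T$ whenever $U < T$, chains below $T$ have bounded length and the poset is well-founded. Thus induction on $T$ in the order on $\mcal{F}(G)$ (or, equivalently, on $\#T$) is available, and I would run the entire argument on this scheme.

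For injectivity, suppose $W_T(\mbf{a}) = W_T(\mbf{b})$ for every $T \in \mcal{F}(G)$. I claim $a_T = b_T$, by induction on $T$. The base case is $T = 0 = G/G$, where $W_0(\mbf{a}) = \varphi_0(0) a_0 = a_0$, so $a_0 = b_0$. For the inductive step, the inductive hypothesis makes every term with $U < T$ cancel in the difference $W_T(\mbf{a}) - W_T(\mbf{b})$, leaving
$$\varphi_T(T)(a_T - b_T) = 0,$$
and the hypothesis that $\varphi_T(T)$ is not a zero-divisor forces $a_T = b_T$.

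For bijectivity, assume further that every $\varphi_T(T)$ is invertible in $A$, and given a target sequence $(w_T)_{T \in \mcal{F}(G)}$, construct $\mbf{a}$ by the same induction: set $a_0 = w_0$, and once $a_U$ has been defined for every $U < T$, put
$$a_T = \varphi_T(T)^{-1}\!\left( w_T - \sum_{U < T} \varphi_T(U)\, a_U^{\#T/\#U} \right).$$
By construction $W_T(\mbf{a}) = w_T$ for all $T$, so the map is surjective; injectivity in this setting is automatic from the first half, since invertibility is stronger than the non-zero-divisor condition. No serious obstacle arises: the only subtle point is to verify that the inductive scheme on $\mcal{F}(G)$ is well-posed, and this is immediate from properties (1) and (3) of the frame recorded just before the Witt polynomial definition.
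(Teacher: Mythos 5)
Your proof is correct and is the standard argument: the paper does not prove Theorem \ref{invertthm} itself but cites \cite{Ell06} and \cite{Mil}, where the proof is exactly the triangular induction on the frame that you give (solve for $a_T$ using that $\varphi_T(T)$ is a non-zero-divisor, resp.\ invertible, with lower terms handled by the inductive hypothesis). Your observation that $U < T$ forces $\#U$ to be a proper divisor of $\#T$, which together with the finiteness of $\{U : U \leq T\}$ makes the induction well-founded, is the only point needing care, and you handle it.
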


Applying Theorem \ref{invertthm} to the ring 
$A = \QQ[\underline{X},\underline{Y}]$ where $\underline{X} = (X_T)$ and $\underline{Y} = (Y_T)$, and to the vectors 
$(W_T(\underline{X}) + W_T(\underline{Y}))_{T \in \mcal{F}(G)}$ and 
$(W_T(\underline{X})W_T(\underline{Y}))_{T \in \mcal{F}(G)}$, it tells us there are unique families of polynomials $\{S_T(\underline{X},\underline{Y})\}$ and 
$\{M_T(\underline{X},\underline{Y})\}$ in 
$\QQ[\underline{X},\underline{Y}]$ satisfying 
$$
W_T(\underline{X}) + W_T(\underline{Y}) = W_T(\underline{S}) \text{ for all } T \in \mathcal{F}(G)
$$
and 
$$
W_T(\underline{X})W_T(\underline{Y}) = W_T(\underline{M}) \text{ for all } T \in \mathcal{F}(G).
$$
More explicitly, this says 
\begin{equation}\label{STF}
\sum_{U \leq T} \varphi_T(U) X_U^{\# T / \#U} + \sum_{U \leq T} \varphi_T(U) Y_U^{\# T / \#U} = 
\sum_{U \leq T} \varphi_T(U) S_U^{\# T / \#U}
\end{equation}
and
\begin{equation}\label{MTF}
\left(\sum_{U \leq T} \varphi_T(U) X_U^{\# T / \#U}\right)\left(\sum_{U \leq 
T} \varphi_T(U) Y_U^{\# T / \#U}\right) = 
\sum_{U \leq T} \varphi_T(U) M_U^{\# T / \#U}
\end{equation}
for all $T$.  
The polynomials $S_T$ and $M_T$ each only 
depend on the variables $X_U$ and $Y_U$ for $U \leq T$. 

A significant theorem of Dress and Siebeneicher \cite[p.~107]{DS88}, which generalizes Witt's theorem, says that
the polynomials $S_T$ and $M_T$ have coefficients in $\ZZ$. 
We call the $S_T$'s and $M_T$'s the Witt addition and multiplication polynomials, respectively. 

\begin{xmp}
\label{xmp:MTST}
Taking $T = 0$, 
$$
S_0(\underline{X},\underline{Y}) = X_0 + Y_0, \ \ \ 
M_0(\underline{X},\underline{Y}) = X_0Y_0.
$$  
If $T \cong G/H$ where $H$ is a maximal proper open 
subgroup, so $\{U \colon U \leq T\} = \{0,T\}$, solving for $S_T$ and $M_T$ in 
(\ref{STF}) and (\ref{MTF}) yields
$$
S_T = X_T + Y_T + 
\frac{(X_0+Y_0)^{\#T} - X_0^{\#T} - Y_0^{\#T}}{\varphi_T(T)},$$
$$
M_T = X_0^{\#T}Y_T + X_TY_0^{\#T} + \varphi_T(T)X_TY_T. 
$$
Further addition and multiplication 
polynomials could be very complicated to write out explicitly, as is already apparent 
for the $p$-typical Witt vectors if you try to go past the first two polynomials. 
\end{xmp}


Since $S_T$ and $M_T$ have integral coefficients, they can be evaluated on any ring, including 
rings where the hypotheses of Theorem \ref{invertthm} break down, like a ring of characteristic $p$ 
when $G$ is a pro-$p$ group.

Let $G$ be a profinite group. 
For any commutative ring $A$, the {\it Witt--Burnside ring} $\mbf{W}_G(A)$ as a set is 
the product space $\prod_{T \in \mathcal{F}(G)} A$, with 
elements written as $\mbf{a} = (a_T)_{T \in \mathcal{F}(G)}$. 
The ring operations on $\mbf{W}_G(A)$ are 
defined by
$\mbf{a} + \mbf{b} = (S_T(\mbf{a},\mbf{b}))_{T \in \mathcal{F}(G)}
$
and
$
\mbf{a} \cdot \mbf{b} = (M_T(\mbf{a},\mbf{b}))_{T \in \mathcal{F}(G)}.
$ This construction is functorial: when $f \colon A \rightarrow B$ 
define $\W_G(f) \colon \W_G(A) \rightarrow \W_G(B)$ 
by applying $f$ to the coordinates: 
$
\W_G(f)(\mbf{a}) = (f(a_T))_{T \in \mcal{F}(G)} \in \W_G(B).
$
Because the polynomials $S_T$ and $M_T$ 
have integral coefficients, $\W_G(f)$ is a ring homomorphism 
and composition of ring homomorphisms is respected (in the same direction), so 
$\W_G$ is a covariant functor from commutative rings to commutative rings.

Also, like the $p$-typical setting, one can collect all the Witt polynomials together, to get a ring homomorphism 
$W : \W_G(A) \to \prod_{T \in \mcal{F}(G)} A$ which is $W_T$ in the $T$-th coordinate:
$$W(\mbf{a}) = (W_T(\mbf{a}))_{T \in \mathcal{F}(G)} = \left( \sum_{U \leq T} \varphi_T(U) a_U^{\# T/ \# U} \right)_{T \in \mcal{F}(G)}.$$
This homomorphism is called the {\it ghost map} and its coordinates 
$W_T(\mbf{a})$ are called the {\it ghost components} of $\mbf{a}$.  In some cases it 
is quite useless: if $G$ is pro-$p$ and $A$ has characteristic $p$ then 
$W(\mbf{a}) = (a_0^{\#T})_{T \in \mcal{F}(G)}$, whose dependence 
on $\mbf{a}$ only involves $a_0$.   
If $A$ fits the hypothesis of Theorem \ref{invertthm} then 
the ghost map is injective (i.e., the ghost components of $\mbf{a}$ determine $\mbf{a}$). The ghost map is essential in many of the calculations involving Witt vectors. In particular, to prove an algebraic identity, one first reformulates the desired identity in the ring of Witt vectors over a $\ZZ$-torsion free ring where ghost components determine a Witt vector. Then, using functoriality, one deduces the desired result from this reformulation.

\section{The frame of $D_{2^\infty}$}\label{sec:D2}



Throughout this section $G$ will denote $D_{2^\infty}= \left\{ \left( \begin{smallmatrix} \pm 1 & a \\ 0 & 1 \end{smallmatrix} \right) : a \in \ZZ_2 \right\},$ and $k$ will be a field of characteristic $2$ unless otherwise stated. The group $G$ is naturally isomorphic to $\ZZ_2 \rtimes \{ \pm 1\}$.  Our first goal is to describe the frame of $G$, and then the Witt polynomials. We also discuss a bit about the topology of $\W_{G}(k)$. 


Even though there are many subgroups of each index in $G$, a basic calculation shows only three conjugacy classes of open subgroups of each index $2^n \geq 1$ for $n \geq 1$ 
which we denote as follows. Set $r = \left( \begin{smallmatrix} 1 & 1 \\ 0 &1 \end{smallmatrix}\right)$ and $s = \left( \begin{smallmatrix} -1 & 0 \\ 0 & 1 \end{smallmatrix} \right)$. Representatives for the conjugacy classes of open subgroups are $H_n = \langle r^{2^{n-1}} \rangle$, $K_n = \langle s, r^{2^n} \rangle $ and $K_n' = \langle rs, r^{2^n} \rangle.$ In particular, $$H_n = \{ \left( \begin{smallmatrix} 1 & a \\ 0 & 1 \end{smallmatrix} \right) \colon a \equiv 0 \bmod 2^{n-1} \ZZ_2\},$$ $$K_n = \{ \left( \begin{smallmatrix} \pm 1 & a \\ 0 & 1 \end{smallmatrix} \right) \colon a \equiv 0 \bmod 2^n \ZZ_2 \} \text{ and }$$ $$K_n' = \{ \left( \begin{smallmatrix} 1 & a \\ 0 & 1 \end{smallmatrix} \right) \colon a \equiv 0 \bmod 2^{n-1} \ZZ_2)\} \cup \{ \left( \begin{smallmatrix} -1 & b \\ 0 & 1 \end{smallmatrix} \right) \colon b \equiv -1 \bmod 2^{n-1} \ZZ_2 \}.$$

Set $T_n = G/H_n$, $U_n = G/K_n$ and $U_n' = G/K_n'$. So $\# T_n = \# U_n = \# U_n' = 2^n$. In $\mcal{F}(G)$, the cover of $T_n$ is $T_{n+1}$, the covers of $U_n$ are $U_{n+1}$ and $T_{n+1}$ and the covers of $U_{n}'$ are $U_{n+1}'$ and $T_{n+1}$. The bottom portion of the Hasse diagram for the lattice $\mcal{F}(G)$ is pictured in Figure \ref{fig:FrameD2infty}. Clearly $T_n \cong G$ as groups for any $n \geq 2$ and since neither $K_n$ nor $K_n'$ is normal, $U_n$ and $U_n'$ are not groups.

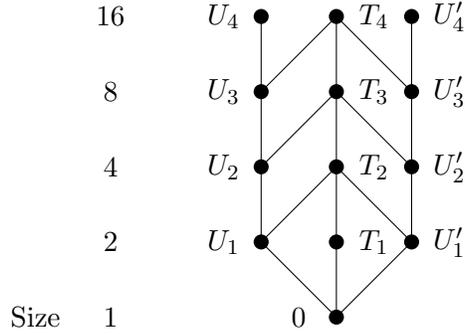
\begin{figure}[htp]
\centering
\begin{tikzpicture}[smooth]


\fill[black] (0,0) circle (0.1cm);

\fill[black] (-1,1) circle (0.1cm);
\fill[black] (0,1) circle (0.1cm);
\fill[black] (1,1) circle (0.1cm);

\fill[black] (-1,2) circle (0.1cm);
\fill[black] (0,2) circle (0.1cm);
\fill[black] (1,2) circle (0.1cm);

\fill[black] (-1,3) circle (0.1cm);
\fill[black] (0,3) circle (0.1cm);
\fill[black] (1,3) circle (0.1cm);

\fill[black] (-1,4) circle (0.1cm);
\fill[black] (0,4) circle (0.1cm);
\fill[black] (1,4) circle (0.1cm);

\draw[-] (0,0) -- (-1,1);
\draw[-] (0,0) -- (0,1);
\draw[-] (0,0) -- (1,1);

\draw[-] (-1,1) -- (-1,2);
\draw[-] (0,1) -- (0,2);
\draw[-] (1,1) -- (1,2);
\draw[-] (-1,1) -- (0,2);
\draw[-] (1,1) -- (0,2);

\draw[-] (-1,2) -- (-1,3);
\draw[-] (0,2) -- (0,3);
\draw[-] (1,2) -- (1,3);
\draw[-] (-1,2) -- (0,3);
\draw[-] (1,2) -- (0,3);

\draw[-] (-1,3) -- (-1,4);
\draw[-] (0,3) -- (0,4);
\draw[-] (1,3) -- (1,4);
\draw[-] (-1,3) -- (0,4);
\draw[-] (1,3) -- (0,4);

\node at (-0.5,0) {$0$} {};

\node at (-1.5,1) {$U_1$} {};
\node at (-1.5,2) {$U_2$} {};
\node at (-1.5,3) {$U_3$} {};
\node at (-1.5,4) {$U_4$} {};

\node at (0.5,1) {$T_1$} {};
\node at (0.5,2) {$T_2$} {};
\node at (0.5,3) {$T_3$} {};
\node at (0.5,4) {$T_4$} {};

\node at (1.5,1) {$U_1'$} {};
\node at (1.5,2) {$U_2'$} {};
\node at (1.5,3) {$U_3'$} {};
\node at (1.5,4) {$U_4'$} {};

\node at (-4,0) {Size} {};

\node at (-3,0) {$1$} {};
\node at (-3,1) {$2$} {};
\node at (-3,2) {$4$} {};
\node at (-3,3) {$8$} {};
\node at (-3,4) {$16$} {};

\end{tikzpicture}
\caption{Portion of the frame $\mcal{F}(G)$ where $G = D_{2^\infty}$.}
\label{fig:FrameD2infty}
\end{figure}


\begin{thm}
In $\W_{G}(\ZZ[\underline{X}])$:
\begin{eqnarray*}
W_{T_n}(\underline{X}) & = & \sum\limits_{V \leq T_n} \# V X_V^{\# T / \# V}  = X_0^{2^n} + \sum_{i=1}^n 2^i X_{T_i}^{2^{n-i}} +\sum_{i=1}^{n-1} 2^i (X_{U_i}^{2^{n-i}} +X_{U_i'}^{2^{n-i}}) \\
W_{U_n}(\underline{X}) & = & X_0^{2^n} + \sum\limits_{i=1}^n 2 X_{U_i}^{2^{n-i}} \\
W_{U_n'}(\underline{X}) & = & X_0^{2^n} + \sum\limits_{i=1}^n 2 X_{U_i'}^{2^{n-i}} 
\end{eqnarray*}
\end{thm}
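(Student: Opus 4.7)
The plan is to compute the coefficients $\varphi_T(V)$ appearing in (\ref{Wdef}) case by case for $T \in \{T_n, U_n, U_n'\}$. The common tool is the standard identification $\Map_G(G/H, V) \cong V^H$ for a transitive $G$-set $V$, so that $\varphi_T(V)$ is just the number of $H$-fixed points in $V$ when $T = G/H$.

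For $T = T_n$, I would first verify that the stabilizer $H_n = \langle r^{2^{n-1}}\rangle$ is normal in $G$: it lies inside the abelian subgroup $\ZZ_2$ and is preserved by the conjugation $r \mapsto r^{-1}$ induced by $s$, so it is stable under all of $G$. Fact (2) of Section \ref{sec:WB} then gives $\varphi_{T_n}(V) = \#V$ for every $V \leq T_n$. The downward set $\{V \leq T_n\}$ can be read off Figure \ref{fig:FrameD2infty} as $\{0, T_1, \ldots, T_n, U_1, \ldots, U_{n-1}, U_1', \ldots, U_{n-1}'\}$, and substituting into (\ref{Wdef}) yields the claimed expression.

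For $T = U_n$, the stabilizer $K_n$ is non-normal once $n \geq 2$, so fact (2) fails and I need a direct computation. From the Hasse diagram the downward set is $\{0, U_1, \ldots, U_n\}$, so it suffices to compute $\varphi_{U_n}(U_i)$ for $1 \leq i \leq n$. Working in the realization $G = \ZZ_2 \rtimes \{\pm 1\}$ with elements $(\epsilon, a)$, I would compute $g^{-1}(\epsilon, a)g$ for $g = (\epsilon', b)$ and observe that conjugation preserves the sign $\epsilon$; the containment $g^{-1} K_n g \subseteq K_i$ then reduces to the single constraint $2b \in 2^i \ZZ_2$, i.e., $g \in K_{i-1}$. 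Hence $\varphi_{U_n}(U_i) = |(G/K_i)^{K_n}| = [K_{i-1}:K_i] = 2$, and substitution gives the formula for $W_{U_n}$. The $U_n'$ case is entirely parallel; alternatively, one can invoke the outer automorphism of $G$ fixing $r$ and sending $s \mapsto rs$, which carries $K_n$ to $K_n'$.

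The main obstacle is the non-normality of $K_n$ and $K_n'$, which rules out the easy formula and forces the explicit conjugation computation. A secondary bookkeeping check is confirming the downward sets claimed above --- specifically that no $T_j$ or $U_j'$ lies below $U_n$. This reduces to a short parity argument: any conjugate of $K_n$ still contains sign-$(-1)$ elements, ruling out containment in $H_j \subseteq \ZZ_2$; and the translation component of a conjugate's sign-$(-1)$ elements is always even, whereas membership in $K_j'$ requires it to be congruent to $1$ modulo a power of $2$.
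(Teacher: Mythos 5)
Your proposal is correct, and its overall strategy matches the paper's: treat $T_n$ via normality of $H_n$ (so $\varphi_{T_n}(V)=\#V$ by fact (2)), and compute $\varphi_{U_n}(U_m)$, $\varphi_{U_n'}(U_m')$ by a direct count of $G$-maps, i.e.\ of cosets $gK_m$ with $K_n\subseteq gK_mg^{-1}$. Where you diverge is in how that count is carried out. The paper passes to the finite quotient $T_{n+1}\cong D_{2^{n}}$, reduces to counting elements $g$ with $s\in g\langle s,r^{2^m}\rangle g^{-1}$, and runs a somewhat lengthy congruence analysis over the two families $g=r^i$ and $g=r^is$ before dividing by the subgroup order to get $\varphi_{U_n}(U_m)=2$. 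You instead stay in $G=\ZZ_2\rtimes\{\pm1\}$, note that conjugation preserves the sign and that $g^{-1}K_ng\subseteq K_m$ collapses to $2b\in 2^m\ZZ_2$, so the qualifying $g$ form exactly $K_{m-1}$ and the count is the index $[K_{m-1}:K_m]=2$; this is cleaner and avoids the case-by-case enumeration. Your automorphism $r\mapsto r$, $s\mapsto rs$ carrying $K_n$ to $K_n'$ is a legitimate and tidier way to dispatch the primed case, which the paper handles only by ``a similar calculation.'' Finally, your parity check that no $T_j$ or $U_j'$ lies below $U_n$ (and the identification of the downward set of $T_n$) supplies a verification the paper simply imports from its earlier description of the frame; including it does no harm and slightly strengthens the write-up.
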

\begin{proof}
Since each subgroup $H_n = \langle r^{2^{n-1}} \rangle$ is normal, 
for any $V \in \mcal{F}(G)$ and $V \leq T_n$, one has $\varphi_{T_n}(V) = \# V$. For any $m \leq n$, $U_m \leq U_n$. 
Recall $T_{n+1} = G/H_{n+1} \cong D_{2^{n}}$ which can be written $\langle r,s | r^{2^n}=1, s^2=1, srs^{-1} = r^{-1} \rangle$. 
Since $T_{n+1} > U_n \geq U_m$ we can view $U_m$ and $U_n$ in $\mcal{F}(D_{2^{n}})$ as the $D_{2^{n}}$-sets 
$D_{2^{n}}/\langle s,r^{2^m}\rangle$ and $D_{2^{n}}/\langle s, r^{2^n}\rangle$ respectively.  
As subsets of $D_{2^{n}}$, $\langle s, r^{2^n} \rangle = \langle s \rangle$ and $\# \langle s, r^{2^m} \rangle = 2^{n - m +1}$. By Section 2.1
the number of $D_{2^{n}}$-maps, $\varphi_{U_n}(U_m)$, is the same as
 $\# \{ g \in D_{2^{n}} : s \in g \langle s, r^{2^m} \rangle g^{-1} \}/ (2^{n - m +1})$. 
Since any conjugate of a rotation is another rotation and any conjugate of a reflection is another reflection, 
to test if $g$ satisfies $s \in g \langle s, r^{2^m} \rangle g^{-1}$ it suffices to check that $$s \in \{ gsg^{-1},gr^{2^m}sg^{-1}, gr^{2\cdot 2^m}sg^{-1}, gr^{3 \cdot 2^m} sg^{-1}, \ldots, gr^{(2^{n-m}-1)2^m}sg^{-1}\}.$$ 

Call this last set $S_g$. 
Let $g \in D_{2^{n}}$ and suppose $g = r^i$. In $D_{2^{n}}$, we have
$r^i r^js r^{-i} = r^{j + 2i}$, so $S_{r^i} =\{r^{2i}s,r^{2^m + 2i}s,r^{2 \cdot 2^{m} + 2i}, \ldots, r^{(2^{n-m}-1)2^m + 2i}s \}$. 
We will count the number of $i$ such that $s \in S_{r^i}$ as $i$ ranges in $\{0,1,2,\ldots,2^n-1\}$. 

Clearly if $i = 0$, $s \in S_{r^i}$. We also have $s \in S_{r^i}$ when $(2^{n-m}-1)2^m + 2i = 2^n$, 
which happens for $i = 2^{m-1}$. In fact, when $i \equiv 0 \bmod 2^{m-1}$ write $i = a2^{m-1}$ where $0 \leq a < 2^{n-(m-1)}$ and then $(2^{n-m}-a)2^m + 2a2^{m-1} = 2^n$ so $s \in S_{r^i}$.
Thus there are at least $2^{n - (m-1)} = 2^{n-m+1}$ values $i$ such that $s \in S_{r^i}$. Now suppose $i \equiv c \bmod 2^{m-1}$ with $c \in \{0,\ldots,2^{m-1}-1\}$. Write $i = c + d 2^{m-1}$ where $0 \leq d \leq 2^{n-m+1} - 1$ and suppose $s \in S_{r^i}$. 
For some $b \in \{0,1\,\ldots,2^{n-m} -1 \}$, $b2^m + 2i = b 2^m + 2c + d 2^m \equiv 0 \bmod 2^n$ so $c \equiv -(b+d)2^{m-1} \bmod 2^{n-1}$ 
and since $m \leq n$ and $0 \leq c < 2^{m-1}$ we get $c = 0$. So $b+d \equiv 0 \bmod 2^{n-m}$. For each $d$ there is a unique $b$ which makes this true. So $s \in S_{r^i}$ if and only if $i = d2^{m-1}$ and $0 \leq d \leq 2^{n-m+1} - 1$. So we have found there are exactly $2^{n-m+1}$ elements $g \in D_{2^{n}}$ such that $s \in S_g$ when $g = r^i$. A similar calculation for $g = r^is$ using the formula $(r^is)r^js(r^is)^{-1} = r^{2i-j}s$ 
shows there are exactly $2^{n-m+1}$ more elements $g \in D_{2^{n}}$ 
such that $s \in S_g$. Thus $\# \{ g \in D_{2^{n}} : s \in g \langle s, r^{2^m} \rangle g^{-1} \} = 2 \cdot 2^{n-m+1}$ and 
$\varphi_{U_n}(U_m) = 2$. A similar calculation shows $\varphi_{U_n'}(U_m') = 2$. 
\end{proof}

The ring $\W_G(k)$ comes with an inherent profinite topology and since it is local there is another topology defined by the unique maximal ideal $\goth{m} = \{ \mbf{a} \colon a_0 = 0 \}$. While in the $p$-typical Witt vectors, these two topologies agree, in general they can be different; for example they differ when $G = \ZZ_p^d$ for $d > 1$ \cite[Thm. 5.7]{Mil}. 
In between them there is a third topology, called the {\it initial vanishing } topology defined by the following filtration of ideals \cite[Def. 3.1]{Mil}. Set $$I_n(G,k) = \{ \mbf{a} \in \W_G(k) : a_T = 0 \mbox{ for } \# T < 2^n \}.$$  It is not hard to argue the these are in fact ideals and they obviously form a strictly decreasing filtration $I_1(G,k) \supset I_2(G,k) \supset \cdots$ \cite[Lem. 3.2]{Mil}. It is the case that $\W_G(k)$ is complete in this initial vanishing topology \cite[Thm. 3.5]{Mil} and in fact since $G$ is topologically finitely generated this topology agrees with the profinite topology \cite[Thm. 3.7]{Mil}. It is natural to ask whether or not this topology is defined by a graded sequence of ideals. In particular is it true that $I_m(G,k)I_n(G,k) \subset I_{m+n}(G,k)$? This holds provided $G$ satisfies a divisibility condition on $G$-sets called the ratio property \cite[Def. 3.14]{Mil}. For a general pro-$p$ group, this is the condition that when $T,T_1,T_2 \in \mcal{F}(G)$ such that $T \geq T_1$, $T \geq T_2$ and $\# T < \# T_1 \# T_2$, then $\varphi_T(T_1)\varphi_T(T_2)/\varphi_T(T)$ is an integral multiple of $p$. This is trivial satisfied if $G$ is abelian. We now show that $G = D_{2^\infty}$ also satisfies the ratio property. 

\begin{cor}
\label{cor:D2inftyratio}
The group $G = D_{2^\infty}$ satisfies the ratio property, i.e., for any $V,V_1,V_2 \in \mcal{F}(G)$ with $V \geq V_1$, $V \geq V_2$ and $\# V < \# V_1 \# V_2$, $$\frac{\varphi_V(V_1)\varphi_V(V_2)}{\varphi_V(V)}$$ is an integral multiple of $2$. 
\end{cor}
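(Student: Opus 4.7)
The plan is to run a direct case analysis on $V$, using the three types of elements in the frame ($T_n$, $U_n$, $U_n'$) catalogued in Figure \ref{fig:FrameD2infty} and the values of $\varphi_V(V_1)$ computed inside the preceding theorem. The key fact already in hand is that $\varphi_{T_n}(V_1) = \# V_1$ for every $V_1 \leq T_n$ (because $H_n$ is normal in $G$), while for the non-normal families $\varphi_{U_n}(U_m) = \varphi_{U_{n}'}(U_m') = 2$ whenever $m \leq n$. What remains is purely a matter of reading off which $V_1, V_2$ can sit below a given $V$ and pushing the definition through.

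First I would note that the lower sets are very restricted: below $U_n$ sit only $0$ and $U_m$ for $m \leq n$, while below $U_n'$ sit only $0$ and $U_m'$ for $m \leq n$, since any $G$-map from $G/K_n$ (resp.\ $G/K_n'$) to $G/H_m$ would force the reflection in $K_n$ (resp.\ $K_n'$) to lie in the all-rotation subgroup $H_m$. Thus I only need to treat the cases $V = T_n$, $V = U_n$, and $V = U_n'$ (the case $V = 0$ is vacuous). I would dispose of the case $V_1 = 0$ or $V_2 = 0$ immediately, since then $\# V_1 \# V_2 \leq \# V$ and the hypothesis $\# V < \# V_1 \# V_2$ fails.

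In the case $V = T_n$, both $\varphi_{T_n}(V_i) = \# V_i$ and $\varphi_{T_n}(T_n) = \# T_n = 2^n$, so the ratio in question collapses to $\# V_1 \# V_2 / \# V$. Since every element of $\mcal{F}(G)$ has 2-power cardinality, this is a power of 2, and the hypothesis $\# V < \# V_1 \# V_2$ forces the exponent to be at least $1$, giving an even integer. In the cases $V = U_n$ and $V = U_n'$, the lower sets force $V_1, V_2$ to be of the same type as $V$ (or $0$, already handled), and then $\varphi_V(V_1) = \varphi_V(V_2) = \varphi_V(V) = 2$ by the theorem, so the ratio is exactly $2$.

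There is really no deep obstacle here; the whole argument rests on having narrow down-sets in $\mcal{F}(G)$ and having $\varphi_V(\cdot)$ take the constant value $2$ on the non-abelian strata and the value $\# V_1$ on the abelian stratum $V = T_n$. The only point requiring care is to verify cleanly that $T_m \not\leq U_n$ and $T_m \not\leq U_n'$, so that the bookkeeping in the last two cases really is as short as claimed.
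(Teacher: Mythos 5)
Your proposal is correct and follows essentially the same argument as the paper: a case analysis on $V$ being $T_n$, $U_n$, or $U_n'$, using $\varphi_{T_n}(V_i)=\#V_i$ in the normal case and $\varphi_{U_n}(U_m)=\varphi_{U_n'}(U_m')=2$ in the chain cases. The only difference is that you explicitly justify that nothing of type $T_m$ lies below $U_n$ or $U_n'$ (via normality of $H_m$ and the reflection in $K_n$), a point the paper's proof uses implicitly when it asserts $V_1=U_{m_1}$, $V_2=U_{m_2}$.
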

\begin{proof}
The $G$-sets $V_1$ and $V_2$ are both nontrivial, so $V$, $V_1$ and $V_2$ are each some $U_i$ or $T_i$. If $V = T_n$, then $\varphi_{V}(V_1) = \# V_1$, $\varphi_{V}(V_2) = \# V_2$ and $\varphi_V(V) = \# V$ so $\varphi_V(V_1)\varphi_V(V_2)/\varphi_V(V) = \# V_1 \# V_2 / \# V$ is an integral multiple of $2$ as all three factors are powers of $2$ and $\# V < \# V_1 \# V_2$. If $V = U_n$ then $V_1 = U_{m_1}$ and $V_2 = U_{m_2}$ for some integers $m_1$ and $m_2$ less than or equal to $n$. Since $\varphi_V(V_1) = \varphi_V(V_2) = \varphi_V(V) = 2$, again the ratio $\varphi_V(V_1)\varphi_V(V_2)/\varphi_V(V) = 2$ is an integral multiple of $2$. A similar argument works when $V = U_n'$. 
\end{proof}

In particular, $I_m(G,k)I_n(G,k) \subset I_{m+n}(G,k)$ for all $m,n$ therefore $\goth{m}^m = (I_1(G,k))^m \subset I_m(G,k)$ for all $m$. Corollary~\ref{cor:equalcoords} shows that this inclusion cannot be reversed. This also answers in the negative the following natural question. Is the topology generated by $\goth{m}$ is the same as this initial vanishing topology? This result is similar to the situation for $G = \ZZ_p^d$ for $d \geq 2$ where also the $\goth{m}$-adic and initial vanishing topologies differ. 

\section{The structure of $\W_{D_{2^\infty}}(k)$}\label{sec:main}

We now show the main results about failure of finite generation of ideals in $\W_{D^\infty}(k)$ with $k$ a field of characteristic $2$. First we establish some identities similar to 
\cite[Lem. 5.17, Lem. 5.18]{Mil}, whose proof goes through for any {\it abelian} pro-$p$ $G$. To modify, we utilize the notion of chain $G$-sets in $\mcal{F}(G)$.

\begin{dff} For any profinite group $G$, a $G$-set $T$ is called a {\it chain} $G$-set provided the collection partially order set $\{U \in \mcal{F}(G) \colon U \leq T\}$ forms a chain. 
\end{dff}


The proofs of \cite[Lem. 5.17, Lem. 5.18]{Mil}  utilized cyclic $\ZZ_p^d$-sets in $\mcal{F}(\ZZ_p^d)$; i.e., $\ZZ_p^d$-sets which are also cyclic groups.  For any profinite $G$, all cyclic $G$-sets are also chain $G$-sets. The chain $D_{2^\infty}$-sets are either $0$ or $U_n$ or $U_n'$. For the remainder of the section, $G$ will denote the group $D_{2^\infty}$ and $k$ is a field of characteristic $2$. 

\begin{lem}
\label{lem:D2notfg1}
Let $\mbf{b} \in \goth{m}$ and $\mbf{c} \in \W_G(k)$ and set $\mbf{m} = \mbf{b}\mbf{c}$. Let $U \in \mcal{F}(G)$ be a chain $G$-set. 
Then $m_U = b_Uc_0^{\# U}$. 
\end{lem}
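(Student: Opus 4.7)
The plan is to prove the universal polynomial identity
\[
M_U(\underline{X},\underline{Y})\big|_{X_0=0} \equiv X_U Y_0^{\#U} \pmod 2
\]
in $\ZZ[\underline{X},\underline{Y}]$ for every chain $G$-set $U$ and then specialize $\underline{X} = \mbf{b}$, $\underline{Y} = \mbf{c}$; since $k$ has characteristic $2$, this yields $m_U = b_U c_0^{\#U}$. The chain $G$-sets are $0$, $U_n$, and $U_n'$ (as read off Figure \ref{fig:FrameD2infty}), with the case $U = 0$ immediate from $M_0 = X_0 Y_0$. By the symmetric role of $U_n$ and $U_n'$, it suffices to treat $U = U_n$.

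The decisive structural feature is that $\{V \leq U_n\} = \{0, U_1, \ldots, U_n\}$ is a chain with $\varphi_{U_j}(U_i) = 2$ for $1 \leq i \leq j$, so
\[
W_{U_j}(\underline{X}) = X_0^{2^j} + 2\sum_{i=1}^{j} X_{U_i}^{2^{j-i}}.
\]
Setting $X_0 = 0$ in (\ref{MTF}) at $T = U_j$ kills the $V = 0$ term on the right (since $M_0|_{X_0=0} = 0$) and leaves the identity
\[
\left(2\sum_{i=1}^{j} X_{U_i}^{2^{j-i}}\right) W_{U_j}(\underline{Y}) \;=\; \sum_{i=1}^{j} 2\,\bigl(M_{U_i}(\underline{X},\underline{Y})\big|_{X_0=0}\bigr)^{2^{j-i}}
\]
in the torsion-free ring $\ZZ[\underline{X},\underline{Y}]$. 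I would divide through by $2$ at this stage (which is legitimate in $\ZZ[\underline{X},\underline{Y}]$), then reduce modulo $2$; the mod $2$ reduction annihilates every contribution from the $2 Y_{U_k}^{2^{j-k}}$ summands of $W_{U_j}(\underline{Y})$, leaving
\[
Y_0^{2^j}\sum_{i=1}^{j} X_{U_i}^{2^{j-i}} \;\equiv\; \sum_{i=1}^{j} \bigl(M_{U_i}(\underline{X},\underline{Y})\big|_{X_0=0}\bigr)^{2^{j-i}} \pmod 2.
\]

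Finally, I would induct on $j$. The base case $j=1$ is read off directly from the explicit formula for $M_T$ in Example \ref{xmp:MTST} with $T = U_1$, giving $M_{U_1}|_{X_0=0} \equiv X_{U_1} Y_0^2 \pmod 2$. For the inductive step, the hypothesis gives $(M_{U_i}|_{X_0=0})^{2^{j-i}} \equiv X_{U_i}^{2^{j-i}} Y_0^{2^j} \pmod 2$ for $i < j$, so these terms cancel matching summands on the left of the displayed congruence, isolating $M_{U_j}|_{X_0=0} \equiv X_{U_j} Y_0^{2^j} \pmod 2$. The case $U = U_n'$ follows verbatim from the symmetric chain $\{0, U_1', \ldots, U_n'\}$ and the identical $\varphi$-values computed at the end of the proof of the Witt polynomial theorem. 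The main thing to watch is that the division by $2$ must be performed over $\ZZ[\underline{X},\underline{Y}]$ before reducing modulo $2$, since the ghost-component argument from Theorem \ref{invertthm} collapses in characteristic $2$ when $G$ is pro-$2$; this two-step reduction is what replaces a direct ghost-map calculation.
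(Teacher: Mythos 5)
Your proposal is correct and is essentially the paper's own argument: the paper also proves the universal congruence (phrased as $z_U \equiv x_U y_0^{\#U} \bmod 2R$ for $\mbf{z}=\mbf{x}\mbf{y}$ in $\W_G(\ZZ[\underline{X},\underline{Y}])$ with $x_0=0$) by induction along the chain, using the ghost identity (\ref{MTF}) at $U$ and a reduce-mod-$4$/divide-by-$2$ step equivalent to your divide-by-$2$-then-reduce-mod-$2$, before specializing to $k$ by functoriality. The only cosmetic differences are that the paper treats an arbitrary chain $G$-set uniformly (with the base case absorbed into the induction) rather than splitting into $U_n$ and $U_n'$ with an explicit base case from Example \ref{xmp:MTST}.
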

\begin{proof}
Consider $R = \mbf{Z}[\underline{X},\underline{Y}]$ and let $\mbf{x},\mbf{y}$ be Witt vectors in $\W_G(R)$ with $x_V = X_V$ for $V \neq 0$, $x_0 = 0$, and $y_V = Y_V$ for all $V \in \mcal{F}(G)$. Let $\mbf{z} = \mbf{x}\mbf{y}$. We aim to show $z_U \equiv x_U y_0^{\# U} \bmod 2R$ as then functoriality yields the result. 

Note $z_0 = x_0y_0 = 0$, so we can assume $U \neq 0$. Since $U$ is chain, each $V \leq U$ is also. Assume by induction that for each $V < U$, $z_V \equiv x_V y_0^{\# V} \bmod 2R$. Now consider the equation $W_U(\mbf{z}) = W_U(\mbf{x})W_U(\mbf{y})$:\begin{equation}\label{eq:D2notfg11}\sum\limits_{0 < V \leq U} 2 z_V^{\# U / \# V} = \left(\sum\limits_{0 < V \leq U} 2 x_V^{\# U / \# V}\right)\left( y_0^{\#U} + \sum\limits_{0 < V \leq U} 2 y_V^{\# U / \# V}\right).\end{equation} Since $z_V \equiv x_V y_0^{\# V} \bmod 2R$ for $V < U$, $2z_V^{\# U / \# V} \equiv 2x_V^{\# U/ \#V } y_0^{\# U} \bmod 4R$ and so reducing (\ref{eq:D2notfg11}) mod $4R$ we get $$\sum_{0 < V < U} 2x_V^{\# U/ \#V } y_0^{\# U} + 2z_U \equiv \sum_{0 < V < U} 2x_V^{\# U/ \#V } y_0^{\# U} + 2x_U y_0^{\# U} \bmod 4R.$$ Removing like terms on either side and dividing by $2$ shows the desired congruence.  
\end{proof}

\begin{lem}
\label{lem:D2notfg2}
Let $\mbf{x},\mbf{y} \in \goth{m}$ and set $\mbf{s} = \mbf{x} + \mbf{y}$. Let $U \in \mcal{F}(G)$ be a chain $G$-set. Then $\mbf{s}_U = \mbf{x}_U + \mbf{y}_U$. 
\end{lem}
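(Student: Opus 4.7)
My plan is to mimic the structure of the proof of Lemma \ref{lem:D2notfg1}: lift to a $\ZZ$-torsion free polynomial ring, prove the identity modulo $2$ by induction along the chain below $U$, and then transfer to $k$ via functoriality (since $k$ has characteristic $2$).

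Concretely, I would set $R = \ZZ[\underline{X},\underline{Y}]$ and define $\mbf{x},\mbf{y} \in \W_G(R)$ by $x_0 = y_0 = 0$ and $x_V = X_V$, $y_V = Y_V$ for all $V \neq 0$. Put $\mbf{s} = \mbf{x} + \mbf{y}$. The claim reduces via functoriality (through the evaluation map $R \to k$ sending $X_V,Y_V$ to the coordinates of the given $\mbf{x},\mbf{y} \in \goth{m}$) to showing
\[
s_U \equiv x_U + y_U \pmod{2R}
\]
for every chain $G$-set $U$. The base case $U = 0$ is automatic because $S_0(\mbf{x},\mbf{y}) = x_0 + y_0 = 0$.

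For the inductive step, I treat $U = U_n$ (the $U = U_n'$ case is identical). Every $V \leq U_n$ is itself a chain $G$-set, and the set $\{V : V \leq U_n\}$ is the chain $0 < U_1 < U_2 < \cdots < U_n$. Assume $s_{U_i} \equiv x_{U_i} + y_{U_i} \pmod{2R}$ for $i < n$. Using the Witt polynomial
\[
W_{U_n}(\underline{X}) = X_0^{2^n} + \sum_{i=1}^{n} 2\, X_{U_i}^{2^{n-i}}
\]
from the preceding theorem, the identity $W_{U_n}(\mbf{s}) = W_{U_n}(\mbf{x}) + W_{U_n}(\mbf{y})$ together with $x_0 = y_0 = s_0 = 0$ gives
\[
\sum_{i=1}^{n} 2\, s_{U_i}^{2^{n-i}} \;=\; \sum_{i=1}^{n} 2\, x_{U_i}^{2^{n-i}} + \sum_{i=1}^{n} 2\, y_{U_i}^{2^{n-i}}.
\]
Reducing modulo $4R$, the inductive hypothesis upgrades to $2\, s_{U_i}^{2^{n-i}} \equiv 2(x_{U_i} + y_{U_i})^{2^{n-i}} \pmod{4R}$ for $i < n$, and then the Freshman's Dream $(a+b)^{2^k} \equiv a^{2^k} + b^{2^k} \pmod{2}$ (valid because $2^{n-i} \geq 2$ for $i < n$, so every interior binomial coefficient $\binom{2^{n-i}}{j}$ is even) gives $2\, s_{U_i}^{2^{n-i}} \equiv 2\, x_{U_i}^{2^{n-i}} + 2\, y_{U_i}^{2^{n-i}} \pmod{4R}$. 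Cancelling these terms leaves $2 s_{U_n} \equiv 2 x_{U_n} + 2 y_{U_n} \pmod{4R}$, and since $R$ is $\ZZ$-torsion free we may divide by $2$ to conclude $s_{U_n} \equiv x_{U_n} + y_{U_n} \pmod{2R}$, completing the induction.

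The only real substance is identifying the right statement to induct on and seeing that the exponents $\# U / \# V$ appearing in the Witt polynomial for a chain $G$-set are forced to be powers of $2$ (in fact at least $2$ for $V$ strictly below $U$), which is exactly what the Freshman's Dream needs. The structural reason the chain hypothesis is essential is that for a non-chain $G$-set such as $T_n$, the Witt polynomial contains mixed terms with coefficient $\varphi_{T_n}(V) = \# V$ possibly larger than $2$, and incomparable $V$'s below $T_n$ prevent a single linear chain of inductive reductions; this is the one place I would be careful, but the chain hypothesis sidesteps it entirely.
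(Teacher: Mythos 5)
Your proposal is correct and follows essentially the same route as the paper: lift to $\ZZ[\underline{X},\underline{Y}]$, induct along the chain below $U$ using the identity $W_U(\mbf{s}) = W_U(\mbf{x}) + W_U(\mbf{y})$ with the coefficient-$2$ Witt polynomial for chain $G$-sets, and transfer to $k$ by functoriality. The only cosmetic difference is that you cancel modulo $4R$ and then divide by $2$, while the paper divides by $2$ first and works modulo $2R$; the underlying use of the Freshman's Dream and $\ZZ$-torsion freeness is identical.
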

\begin{proof}
Let $R = \mbf{Z}[\underline{X},\underline{Y}]$ and let $\mbf{x},\mbf{y}$ be Witt vectors in $\W_G(R)$ with $x_0 = 0$ and $x_V = X_V$ for $V \neq 0$, and $y_0 = 0$ and $y_V = Y_V$ for $V \neq 0$. Set $\mbf{z} = \mbf{x} + \mbf{y}$. By functoriality, it suffices to show the congruence $z_U \equiv x_U + y_U \bmod 2R$ for any chain $G$-set $U$. 

Note $z_0 = x_0 +y_0 = 0$. So we can take $U \neq 0$. Since $U$ is chain, each $V \leq U$ is also. Assume by induction that for each $V < U$, $z_V \equiv x_V + y_V \bmod 2R$. Consider the equation $W_U(\mbf{z}) = W_U(\mbf{x}) + W_U(\mbf{y})$:\begin{equation}\label{eq:D2notfg21}\sum\limits_{0 < V \leq U} 2 z_V^{\# U / \# V} = \sum\limits_{0 < V \leq U} 2 \left(x_V^{\# U / \# V} +y_V^{\# U / \# V}\right).\end{equation} For $V < U$, since $z_V \equiv x_V + y_V \bmod 2R$, one has $z_V^{\# U / \# V} \equiv x_V^{\# U / \# V} + y_V^{\# U/ \# V} \bmod 2R$, and so dividing  (\ref{eq:D2notfg21}) by $2$ and reducing mod $2R$ we get $$\sum_{0 < V < U} \left( x_V^{\# U/ \#V } + y_V^{\# U / \# V} \right) + z_U \equiv \sum\limits_{0 < V < U} \left(x_V^{\# U / \# V} +y_V^{\# U / \# V}\right) + \left(x_U + y_U\right) \bmod 2R.$$ Removing like terms shows the desired congruence. 
\end{proof}

For a finite collection $\{\mbf{x}_1,\ldots,\mbf{x}_r\} \subset \W_G(k)$ we write $x_{i,T}$ for the $T$-th coordinate of $\mbf{x}_i$. 

\begin{cor}
\label{corgenD}
Let $r \geq 1$ be an integer and consider a finite set $\{\mbf{b}_1,\ldots,\mbf{b}_r\} \subset \goth{m}$ and Witt vectors $\{\mbf{c}_1,\ldots,\mbf{c}_r\} \subset \W_G(k)$. Set $\mbf{m} = \sum\limits_{i=1}^r \mbf{b}_i\mbf{c}_i$. For any chain $G$-set $U$, one has $m_U = \sum\limits_{i=1}^r b_{i,U}c_{i,0}^{\# U}$. 
\end{cor}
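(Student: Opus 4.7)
The plan is to combine Lemmas \ref{lem:D2notfg1} and \ref{lem:D2notfg2} via a straightforward induction on $r$. The key preliminary observation is that since $\goth{m}$ is the (maximal) ideal of Witt vectors with vanishing $0$-coordinate, each product $\mbf{b}_i \mbf{c}_i$ lies in $\goth{m}$, and therefore any partial sum $\sum_{i=1}^j \mbf{b}_i \mbf{c}_i$ lies in $\goth{m}$ as well. This is precisely the hypothesis needed to apply Lemma \ref{lem:D2notfg2}.

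For the base case $r = 1$, Lemma \ref{lem:D2notfg1} applied to $\mbf{b} = \mbf{b}_1$ and $\mbf{c} = \mbf{c}_1$ gives $m_U = b_{1,U} c_{1,0}^{\# U}$ for every chain $G$-set $U$, which is the claim. For the inductive step, write
\[
\mbf{m} = \mbf{m}' + \mbf{b}_r \mbf{c}_r, \qquad \mbf{m}' := \sum_{i=1}^{r-1} \mbf{b}_i \mbf{c}_i.
\]
Both summands lie in $\goth{m}$ by the observation above, so Lemma \ref{lem:D2notfg2} yields $m_U = m'_U + (\mbf{b}_r \mbf{c}_r)_U$ on any chain $G$-set $U$. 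By the induction hypothesis, $m'_U = \sum_{i=1}^{r-1} b_{i,U} c_{i,0}^{\# U}$, while Lemma \ref{lem:D2notfg1} gives $(\mbf{b}_r \mbf{c}_r)_U = b_{r,U} c_{r,0}^{\# U}$. Adding these produces the desired formula.

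There is no real obstacle here beyond bookkeeping: the two input lemmas already do the genuine work (handling addition and multiplication of $\goth{m}$-elements on chain coordinates), and the corollary simply iterates them. The only point worth flagging is that one must note at the outset that each product $\mbf{b}_i \mbf{c}_i$ itself lies in $\goth{m}$ so that Lemma \ref{lem:D2notfg2} applies to the partial sums at every stage of the induction.
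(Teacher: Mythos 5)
Your proof is correct and follows the same route as the paper, which simply applies Lemma \ref{lem:D2notfg1} to each product $\mbf{b}_i\mbf{c}_i$ and Lemma \ref{lem:D2notfg2} to their sum; your explicit induction and the observation that each $\mbf{b}_i\mbf{c}_i$ lies in $\goth{m}$ just make the paper's implicit bookkeeping precise.
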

\begin{proof}
Use Lemma \ref{lem:D2notfg1} on each $\mbf{b}_i\mbf{c}_i$ and Lemma \ref{lem:D2notfg2} on their sum. 
\end{proof} 

Our first application of this is to show the initial vanishing topology differs from the $\goth{m}$-adic topology.  

\begin{cor}\label{cor:equalcoords}
For any natural number $n$, one has $I_n(G,k) \not\subseteq \goth{m}^n$. 
\end{cor}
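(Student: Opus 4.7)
The plan is to exhibit, for each $n \geq 2$, an explicit Witt vector that lies in $I_n(G,k)$ but not in $\goth{m}^n$. (For $n = 1$ the containment $I_1(G,k) = \goth{m} \subseteq \goth{m}^1$ is trivial, so presumably the statement is intended for $n \geq 2$.) The natural candidate is the Witt vector $\mbf{a}$ concentrated at the single chain $G$-set $U_n$: set $a_{U_n} = 1$ and $a_T = 0$ for every other $T \in \mcal{F}(G)$.

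The first verification is immediate: the only nonzero coordinate of $\mbf{a}$ sits at $U_n$, and $\# U_n = 2^n$, so every coordinate of $\mbf{a}$ indexed by a $G$-set of size strictly less than $2^n$ vanishes. Hence $\mbf{a} \in I_n(G,k)$.

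For the second verification I argue by contradiction. Suppose $\mbf{a} \in \goth{m}^n$. Since $n \geq 2$, I may expand $\mbf{a}$ as a finite sum
\[
\mbf{a} = \sum_{i=1}^{r} \mbf{b}_i \mbf{c}_i
\]
with $\mbf{b}_i \in \goth{m}$ and $\mbf{c}_i \in \goth{m}^{n-1} \subseteq \goth{m}$; in particular $c_{i,0} = 0$ for every $i$. As was noted in the paper, $U_n$ is a chain $G$-set, so Corollary \ref{corgenD} applies and yields
\[
1 = a_{U_n} = \sum_{i=1}^{r} b_{i,U_n}\, c_{i,0}^{\# U_n} = 0,
\]
the desired contradiction. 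Thus $\mbf{a} \in I_n(G,k) \setminus \goth{m}^n$.

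The key point driving the argument is that chain coordinates of products behave multiplicatively in the very simple way recorded in Corollary \ref{corgenD}: the $U$-th coordinate of $\mbf{b}\mbf{c}$ only sees $b_U$ and $c_0$. Once one has that rigidity, any element of $\goth{m}^n$ (for $n \geq 2$) must vanish on every chain coordinate, while our test vector $\mbf{a}$ was designed to have a nonzero chain coordinate at $U_n$. There is no real obstacle here — all of the technical work has already been done in Lemmas \ref{lem:D2notfg1} and \ref{lem:D2notfg2}.
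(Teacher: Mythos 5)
Your proof is correct and follows essentially the same route as the paper: both apply Corollary \ref{corgenD} to a decomposition $\sum_i \mbf{b}_i\mbf{c}_i$ with $\mbf{b}_i \in \goth{m}$ and $c_{i,0} = 0$ to conclude that every element of $\goth{m}^n$ (for $n \geq 2$) vanishes at all chain coordinates, and then observe that $I_n(G,k)$ contains vectors that are nonzero at a chain coordinate $U_m$ with $m \geq n$. Your explicit test vector concentrated at $U_n$ and your remark about the $n=1$ case are only cosmetic refinements of the paper's argument.
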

\begin{proof}
From Corollary~\ref{corgenD}, any element  $\mbf{a} \in \goth{m}^n$ satisfies $a_T = 0$ for any chain $G$-set $T$. 
Since elements of $I_n(G,k)$ only must be zero in coordinates indexed by $G$-sets $T$ with $\# T < 2^n$ any element in $I_n(G,k)$ which is nonzero at $U_m$ with $m > n$ cannot lie in $\goth{m}^n$. 
\end{proof}

Next, we apply Corollary~\ref{corgenD} to show that $\goth{m}$ is not finitely generated. 

\begin{thm}
\label{thm:D2inftynotnoth}
The ideal $\goth{m}$ is not finitely generated. 
\end{thm}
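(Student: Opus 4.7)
The plan is to argue by contradiction via the cotangent space: if $\goth{m}$ were generated by finitely many elements $\mbf{b}_1,\ldots,\mbf{b}_r$, then $\goth{m}/\goth{m}^2$ would be a $k$-vector space of dimension at most $r$. To verify the spanning claim, given $\mbf{m} = \sum_i \mbf{b}_i \mbf{c}_i$, I would decompose each $\mbf{c}_i = [c_{i,0}] + (\mbf{c}_i - [c_{i,0}])$, where $[\,\cdot\,]$ denotes the Teichm\"uller lift; since $\mbf{c}_i - [c_{i,0}] \in \goth{m}$, modulo $\goth{m}^2$ we have $\mbf{m} \equiv \sum_i [c_{i,0}]\mbf{b}_i$, so the images $\bar{\mbf{b}}_1,\ldots,\bar{\mbf{b}}_r$ indeed span.

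The key structural input is Corollary~\ref{corgenD}: if $\mbf{b}\mbf{c}$ has both factors in $\goth{m}$, then for any chain $G$-set $U$ we get $(\mbf{b}\mbf{c})_U = b_U c_0^{\#U} = 0$, because $c_0 = 0$. Summing, every element of $\goth{m}^2$ vanishes at the coordinates $U_n$ and $U_n'$ for all $n \ge 1$. This is what produces room in $\goth{m}/\goth{m}^2$.

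To exhibit infinitely many independent classes, I would introduce, for each $n \geq 1$, the Witt vector $\mbf{e}_n \in \goth{m}$ with $(\mbf{e}_n)_{U_n} = 1$ and every other coordinate zero, and show $\{\bar{\mbf{e}}_n : n \ge 1\}$ is $k$-linearly independent in $\goth{m}/\goth{m}^2$. Suppose $\sum_{n=1}^{N} [\alpha_n]\mbf{e}_n \in \goth{m}^2$. By Lemma~\ref{lem:D2notfg1}, the $U_m$-coordinate of $[\alpha_n]\mbf{e}_n$ is $(\mbf{e}_n)_{U_m}\cdot\alpha_n^{2^m}$, which equals $\alpha_n^{2^m}$ when $n = m$ and is zero otherwise. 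Lemma~\ref{lem:D2notfg2} then assembles the sum coordinatewise at each chain set, giving $U_m$-coordinate $\alpha_m^{2^m}$. Vanishing of chain coordinates on $\goth{m}^2$ forces $\alpha_m^{2^m} = 0$, and since $k$ is a field of characteristic $2$ we conclude $\alpha_m = 0$ for each $m$. This contradicts $\dim_k(\goth{m}/\goth{m}^2) \le r$ and finishes the proof.

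The main obstacle is less finding the argument than carefully handling that arithmetic in $\W_G(k)$ is \emph{not} coordinatewise: every step needs to be justified by Lemmas~\ref{lem:D2notfg1} and~\ref{lem:D2notfg2}, and these in turn depend on the characteristic $2$ hypothesis. It is also worth noting that the same argument using the $U_n'$ (rather than $U_n$) would give a second sequence of independent classes, confirming that the phenomenon is robust and tied to the two non-central conjugacy classes of maximal subgroups in $D_{2^\infty}$.
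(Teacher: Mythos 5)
Your proof is correct, but it closes the argument by a genuinely different route than the paper, even though both hinge on the same structural input (Corollary~\ref{corgenD}, i.e.\ the coordinatewise behaviour of products $\mbf{b}\mbf{c}$ with $\mbf{b}\in\goth{m}$ at chain $G$-sets). The paper assumes $\goth{m}=(\mbf{b}_1,\ldots,\mbf{b}_r)$, assembles the chain-coordinate identities $a_{U_m}=\sum_i b_{i,U_m}c_{i,0}^{2^m}$ into a single matrix equation, and deduces that a Frobenius-twisted polynomial map $k^r\to k^n$ would be surjective; it then gets $r\ge n$ by a cardinality count when $k$ is finite and by injectivity of the pullback plus transcendence degree when $k$ is infinite, and takes $n>r$ to reach a contradiction. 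You instead pass to the cotangent space: $r$ generators force $\dim_k\goth{m}/\goth{m}^2\le r$, while the vanishing of all chain coordinates on $\goth{m}^2$ (exactly the observation the paper records as Corollary~\ref{cor:equalcoords}) makes the classes of the vectors $\mbf{e}_n$ supported at $U_n$ linearly independent, since $\sum_n[\alpha_n]\mbf{e}_n\in\goth{m}^2$ forces $\alpha_m^{2^m}=0$, hence $\alpha_m=0$ over a field. Your route avoids the finite/infinite field dichotomy and the algebro-geometric surjectivity argument entirely, and it proves the a priori stronger statement that $\goth{m}/\goth{m}^2$ is infinite-dimensional over $k$; the paper's route, in exchange, isolates the general principle (noted in its remark) that the chain coordinates of elements of any finitely generated ideal cannot be arbitrary. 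Two small economies: the Teichm\"uller lift is harmless but unnecessary for the spanning step (any lift of $c_{i,0}$ to a Witt vector with that $0$-coordinate does the job, since the error term lands in $\goth{m}^2$), and your iterated use of Lemmas~\ref{lem:D2notfg1} and~\ref{lem:D2notfg2} on the sum $\sum_n[\alpha_n]\mbf{e}_n$ is precisely Corollary~\ref{corgenD}, so you can cite it directly.
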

\begin{proof}
Pick $r \geq 1$ and $\mbf{b}_1,\ldots,\mbf{b}_r \in \goth{m}$. Let $I$ be the ideal $(\mbf{b}_1,\ldots,\mbf{b}_r)$. We want to show $I \neq \goth{m}$. We proceed by contradiction. Suppose, for each $\mbf{a} \in I$ there are $\mbf{c}_1,\ldots,\mbf{c}_r \in \W_G(k)$ such that $\mbf{a} = \sum_{i=1}^r \mbf{b}_i\mbf{c}_i$. 

From Corollary \ref{corgenD}, $a_U = \sum\limits_{i=1}^r b_{i,U}c_{i,0}^{\# U}$ for each chain $G$-set $U$. For each integer $n$, collect these equations for $U \in \{U_1,U_2,\ldots,U_n\}$ into a matrix equation over $k$: 

\begin{displaymath} \left[ \begin{array}{c} a_{U_1} \\ a_{U_2} \\ \vdots \\ a_{U_n} \end{array} \right] = \left[ \begin{array}{cccccccccc} b_{1,U_1} & \ldots & b_{r,U_1} & 0 & \ldots & 0 & \ldots & 0 & \ldots & 0 \\
0 & \ldots & 0 & b_{1,U_2} & \ldots & b_{r,U_2} & \ldots & 0 & \ldots & 0 \\
\vdots &  & \vdots & \vdots &  & \vdots &  & \vdots &  & \vdots \\
0 & \ldots & 0 & 0 & \ldots & 0 & \ldots & b_{1,U_n} & \ldots & b_{r,U_n} \\
\end{array} \right] \left[ \begin{array}{c} c_{1,0}^2 \\ c_{2,0}^2 \\ \vdots \\ c_{r,0}^2 \\ c_{1,0}^4 \\ c_{2,0}^4 \\ \vdots \\  c_{r,0}^{4} \\ \vdots \\ c_{1,0}^{2^n} \\ c_{2,0}^{2^n} \\ \vdots \\ c_{r,0}^{2^n} \end{array}\right]. \end{displaymath} 

For each $\mbf{a} \in I = (\mbf{b}_1,\ldots,\mbf{b}_r)$, writing $\mbf{a}$ as a $\W_G(k)$-linear combination of $\mbf{b}_1,\ldots,\mbf{b}_r$ shows there are $c_{1,0},\ldots,c_{r,0}$ in $k$ which solve the matrix equation above. If $I = \goth{m}$ then we can choose $\mbf{a}$ so that $a_{U_i}$ and $a_{U_i'}$ are arbitrary. Thus the polynomial map $\varphi \colon k^r \to k^{n}$ given by 

\begin{displaymath} \left[ \begin{array}{c} c_{1,0} \\ c_{2,0} \\ \vdots \\ c_{r,0} \end{array} \right] \mapsto \left[ \begin{array}{cccccccccc} b_{1,U_1} & \ldots & b_{r,U_1} & 0 & \ldots & 0 & \ldots & 0 & \ldots & 0 \\
0 & \ldots & 0 & b_{1,U_2} & \ldots & b_{r,U_2} & \ldots & 0 & \ldots & 0 \\
\vdots &  & \vdots & \vdots &  & \vdots &  & \vdots &  & \vdots \\
0 & \ldots & 0 & 0 & \ldots & 0 & \ldots & b_{1,U_n} & \ldots & b_{r,U_n} \\
\end{array} \right] \left[ \begin{array}{c} c_{1,0}^2 \\ c_{2,0}^2 \\ \vdots \\ c_{r,0}^2 \\ c_{1,0}^4 \\ c_{2,0}^4 \\ \vdots \\  c_{r,0}^{4} \\ \vdots \\ c_{1,0}^{2^n} \\ c_{2,0}^{2^n} \\ \vdots \\ c_{r,0}^{2^n} \end{array}\right]. \end{displaymath}

\noindent would be surjective. If $k$ is finite then $\# (k^{r}) \geq \# (k^{n})$, so $r \geq n$. If $k$ is infinite then we have a surjective morphism $\varphi$ of varieties $k^r \to k^n$ which induces under pull back a $k$-algebra map $\varphi^* \colon k[X_1,\ldots,X_n] \to k[Y_1,\ldots,Y_r]$. The map $\varphi^*$ is injective as when $\varphi^*(g) = 0$ then we have $g(f_1,\ldots,f_n)  = 0$ in $k[Y_1,\ldots,Y_r]$ which means that $g(f_1(v),\ldots,f_n(v)) = 0$ for all $v \in k^r$. So $g(w) = 0$ for all $w \in k^n$ and so $g = 0$ in $k[X_1,\ldots,X_n]$. This gives an inclusion of function fields $k(X_1,\ldots,X_n) \hookrightarrow k(Y_1,\ldots,Y_r)$. Thus $\textnormal{tr deg}_k k(X_1,\ldots,X_n) \leq \textnormal{tr deg}_k k(Y_1,\ldots,Y_r)$ and so $n \leq r$. We started with no connection between $r$ and $n$, so by using $n > r$ we obtain a contradiction. 
\end{proof}

\begin{rmk}
This proof shows that for any finitely generated ideal $I = (\mbf{b}_1,\ldots,\mbf{b}_r)$ and $\mbf{a} \in I$, then the vector $(a_{U_1},\ldots,a_{U_n})$ cannot be an arbitrary element of $k^n$. This may be used as a general technique to restrict which ideals can be finitely generated. 
\end{rmk}

In addition to $\goth{m}$ not being finitely generated we can also realize $\goth{m}$ itself as an annihilator ideal of a particular Witt vector in $\W_G(k)$. In contrast, in $\W_{\ZZ_p^2}(k)$, the maximal ideal is not an annihilator ideal since this ring is local and reduced. This is sufficient to show that $\W_G(k)$ is not coherent as coherent rings must have finitely generated annihilator ideals \cite{Gla89}.

\begin{thm}
\label{thm:D2Ann}
Consider the Witt vector $\mbf{y} \in \W_G(k)$ such that 
\begin{displaymath}
y_{V} \equiv \begin{cases}
1, & \text{if $V = U_1$ or $V =  T_n$ for some $n \geq 0$,} \\
0, & \textrm{otherwise.}
\end{cases}
\end{displaymath}
Then one has $\tn{Ann}(\mbf{y}) = \goth{m}$. 
\end{thm}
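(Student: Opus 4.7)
The plan is to prove the two inclusions separately. The inclusion $\tn{Ann}(\mbf{y}) \subseteq \goth{m}$ is immediate: $\W_G(k)$ is local with maximal ideal $\goth{m}$ and $\mbf{y} \neq 0$ (for instance $y_{U_1} = 1$), so no unit of $\W_G(k)$ can annihilate $\mbf{y}$, and hence every annihilator must lie in $\goth{m}$.

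For the substantive inclusion $\goth{m} \subseteq \tn{Ann}(\mbf{y})$ I fix an arbitrary $\mbf{x} \in \goth{m}$ and aim to show $\mbf{x}\mbf{y} = 0$ in $\W_G(k)$. The approach mirrors the universal-lift technique already used in Lemmas~\ref{lem:D2notfg1} and~\ref{lem:D2notfg2}. Let $R = \ZZ[X_V]_{V \in \mcal{F}(G),\, V \neq 0}$ and let $\tilde{\mbf{x}}, \tilde{\mbf{y}} \in \W_G(R)$ be the lifts defined by $\tilde{x}_0 = 0$, $\tilde{x}_V = X_V$ for $V \neq 0$, and $\tilde{y}_V$ equal to the $\ZZ$-value ($0$ or $1$) prescribed in the statement. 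Setting $\mbf{m} = \tilde{\mbf{x}}\tilde{\mbf{y}} \in \W_G(R)$ and invoking functoriality of $\W_G$ along $R \twoheadrightarrow R/2R$ followed by any specialization $R/2R \to k$ sending $X_V \mapsto x_V$, it suffices to prove that $m_V \in 2R$ for every $V \in \mcal{F}(G)$.

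The verification proceeds in three stages. First, $m_0 = \tilde{x}_0 \tilde{y}_0 = 0$. Second, for every chain $G$-set $V \in \{U_n, U_n'\}$, the lift-level congruence proved inside Lemma~\ref{lem:D2notfg1} gives $m_V \equiv \tilde{x}_V \tilde{y}_0^{\# V} \equiv 0 \pmod{2R}$. The substantive third stage treats $V = T_n$ by induction on $n$ via the ghost-component identity
\[
W_{T_n}(\mbf{m}) = W_{T_n}(\tilde{\mbf{x}})\, W_{T_n}(\tilde{\mbf{y}}).
\]
A direct evaluation of the explicit formula for $W_{T_n}$ on the chosen support of $\mbf{y}$ yields $W_{T_n}(\tilde{\mbf{y}}) = 2^{n+1}$ for $n \gs 2$ and $W_{T_1}(\tilde{\mbf{y}}) = 2$, while $W_{T_n}(\tilde{\mbf{x}}) \in 2R$ since every nonzero $V \ls T_n$ has $\# V$ even. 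Consequently $W_{T_n}(\mbf{m}) \in 2^{n+1}R$. Expanding
\[
W_{T_n}(\mbf{m}) = m_0^{2^n} + \sum_{i=1}^{n} 2^i m_{T_i}^{2^{n-i}} + \sum_{i=1}^{n-1} 2^i\bigl(m_{U_i}^{2^{n-i}} + m_{U_i'}^{2^{n-i}}\bigr)
\]
and combining the first two stages with the inductive hypothesis on $m_{T_i}$ for $i < n$, every summand other than $2^n m_{T_n}$ lies in $2^{i + 2^{n-i}} R \subseteq 2^{n+1} R$ (using the elementary bound $2^k \gs k+1$ for $k \gs 1$). Hence $2^n m_{T_n} \in 2^{n+1}R$, and the $\ZZ$-torsion freeness of $R$ yields $m_{T_n} \in 2R$, closing the induction.

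The main obstacle is the $T_n$ bookkeeping: one must verify the tight divisibility $v_2(W_{T_n}(\tilde{\mbf{y}})) = n+1$, coming from the telescoping $\sum_{i=1}^n 2^i = 2^{n+1}-2$ together with the $U_1$-contribution $2$, and then track that each subordinate coordinate $m_{T_i}, m_{U_i}, m_{U_i'}$ contributes precisely the right $2$-adic weight to the expansion. These are exactly the features that make the chosen support of $\mbf{y}$ succeed in having annihilator equal to $\goth{m}$; any sparser support would lose the extra factor of $2$ and fail to annihilate some $\mbf{x} \in \goth{m}$ at a $T_n$-coordinate.
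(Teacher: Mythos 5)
Your proof is correct and follows essentially the same route as the paper: lift to $\W_G(\ZZ[\underline{X}])$, show every coordinate of the product lies in $2R$, handle the chain sets $U_n, U_n'$ first (the paper redoes this induction directly rather than reusing the congruence inside Lemma~\ref{lem:D2notfg1}, but the content is identical), and then settle $T_n$ by induction using the key evaluation $W_{T_n}(\mbf{y}) = 2 + \sum_{i=1}^n 2^i = 2^{n+1}$ together with the $2$-adic bound $i + 2^{n-i} \gs n+1$. Your explicit treatment of the easy inclusion $\tn{Ann}(\mbf{y}) \subseteq \goth{m}$ is a small addition the paper leaves implicit.
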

\begin{proof}
Consider $R = \mbf{Z}[\underline{X}]$ and $\mbf{x} \in \W_G(R)$ such that $x_0 = 0$ and $x_V = X_V$ for $V \neq 0$. Consider the vector $\mbf{y}$ as above as an element of $\W_G(R)$ and set $\mbf{z} = \mbf{xy}$. We aim to show for each $V \in \mcal{F}(G)$, $z_V \equiv 0 \bmod 2R$. The result will then follow by functoriality. 
 
Clearly $z_0 = x_0y_0 = 0$. 
Consider the equation $W_{U_1}(\mbf{z}) = W_{U_1}(\mbf{x})W_{U_1}(\mbf{y})$. 
Expanding this out we have $z_0^2 + 2z_{U_1} = (x_0^2 + 2x_{U_1})(y_0 +2y_{U_1})$,
and since $z_0 = 0$, $x_0 = 0$, $y_0 = 0$, and $y_{U_1} = 1$, $2z_{U_1} = 4x_{U_1}$ so $z_{U_1} \equiv 0 \bmod 2R$ . 
Now assume by induction that $z_{U_i} \equiv 0 \bmod 2R$ for $1 \leq i \leq n-1$. 
Note that $W_{U_n}(\mbf{x})W_{U_n}(\mbf{y}) = \sum_{i = 1}^n 2 x_{U_i}^{2^{n-i}}\sum_{i = 1}^n 2 y_{U_i}^{2^{n-i}} = \sum_{i = 1}^n 4 x_{U_i}^{2^{n-i}}\equiv 0 \bmod 4R$. 
Expanding the equation $W_{U_n}(\mbf{z}) = W_{U_n}(\mbf{x}) W_{U_n}(\mbf{y})$ we have 
\begin{equation} \label{eq:NRD21}\sum_{i=1}^{n-1} 2 z_{U_i}^{2^{n-i}} + 2z_{U_n} \equiv 0 \bmod 4R.\end{equation} 
Since $z_{U_i} \equiv 0 \bmod 2R$ for $i = 1,2,\ldots,n-1$, $2 z_{U_i}^{2^{n-i}} \equiv 0 \bmod 4R$ and so we have $2 z_{U_n} \equiv 0 \bmod 4R$ or $z_{U_n} \equiv 0 \bmod 2R$. 

By definition of $\mbf{y}$, $W_{U_n'}(\mbf{y}) = 0$ so $W_{U_n'}(\mbf{z}) = W_{U_n'}(\mbf{x}) W_{U_n'}(\mbf{y}) = 0$ for all $n \geq 1$. Therefore by induction $z_{U_n'} = 0$ for all $n \geq 1$. 

It remains to show $z_{T_n} \equiv 0 \bmod 2R$ for all $n \geq 1$. Recall that $T_n$ is a normal $G$-set so $W_{T_n}(\underline{X}) =  \sum_{V \leq T_n} \# V X_V^{\# T_n / \# V}$. 
Note that $W_{T_1}(\mbf{x})W_{T_1}(\mbf{y}) = 4x_{T_1} \equiv 0 \bmod 4R$, so $W_{T_1}(\mbf{z}) = 2 z_{T_1} \equiv 0 \bmod 4R$. Thus $z_{T_1} \equiv  0 \bmod 2R$.

Now assume by induction that $z_{T_i} \equiv 0 \bmod 2R$ for $ i = 1,2,\ldots, n-1$ and consider the equation $W_{T_n}(\mbf{z}) = W_{T_n}(\mbf{x})W_{T_n}(\mbf{y})$. For all $V < T_n$,  $z_{V} \equiv 0 \bmod 2R$ so $\# V z_{V}^{2^n / \# V} \equiv 0 \bmod 2^{n+1}R$. Reducing $W_{T_n}(\mbf{z}) = W_{T_n}(\mbf{x})W_{T_n}(\mbf{y})$ modulo $2^{n+1}R$, we have $2^{n} z_{T_n} \equiv  W_{T_n}(\mbf{x})W_{T_n}(\mbf{y}) \bmod 2^{n+1}R$. We now expand \begin{equation}\label{eq:NRD22}W_{T_n}(\mbf{x})W_{T_n}(\mbf{y}) = \left(\sum_{V \leq T_n} \# V x_V^{\# T_n / \# V} \right)\left(\sum_{V \leq T_n} \# V y_V^{\# T_n / \# V} \right).\end{equation} Dropping zero terms from (\ref{eq:NRD22}) we have 
\begin{eqnarray*}
W_{T_n}(\mbf{x})W_{T_n}(\mbf{y}) & = & \left(\sum_{0 < V \leq T_n} \# V x_V^{\# T_n / \# V} \right)\left(2 y_{U_1}^{2^{n-1}} + \sum_{i = 1}^{n} 2^i y_{T_i}^{2^{n-i}}\right) \\
& = & \left(\sum_{0 < V \leq T_n} \# V x_V^{\# T_n / \# V} \right) \left(2 + \sum_{i=1}^{n} 2^i \right)\\
& = & \left(\sum_{0 < V \leq T_n} \# V x_V^{\# T_n / \# V} \right) \left(2^{n+1} \right),
\end{eqnarray*}
and so $2^{n} z_{T_n} \equiv 0 \bmod 2^{n+1}R$ which gives $z_{T_n} \equiv 0 \bmod 2R$. 
\end{proof}


\begin{cor}
\label{cor:nonabelnonred}
The ring $\W_G(k)$ is not reduced. 
\end{cor}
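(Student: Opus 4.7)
The plan is to harvest Theorem~\ref{thm:D2Ann} directly: the Witt vector $\mathbf{y}$ constructed there has annihilator equal to the maximal ideal $\goth{m}$, and at the same time lies in $\goth{m}$. That double membership forces $\mathbf{y}^2 = 0$.

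First I would verify that $\mathbf{y} \in \goth{m}$. By definition $y_0$ is the coordinate indexed by the trivial $G$-set $0 = G/G$, which is neither $U_1$ nor one of the $T_n$ (as confirmed by the proof of Theorem~\ref{thm:D2Ann}, where $y_0 = 0$ is used). Hence $y_0 = 0$, which by definition of $\goth{m} = \{\mbf{a} : a_0 = 0\}$ places $\mathbf{y}$ inside $\goth{m}$.

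Next, since $\tn{Ann}(\mathbf{y}) = \goth{m}$ by Theorem~\ref{thm:D2Ann}, multiplying $\mathbf{y}$ by any element of $\goth{m}$ yields zero. Applying this with $\mathbf{y}$ itself, one concludes $\mathbf{y}^2 = 0$. To finish it remains to note that $\mathbf{y} \neq 0$ in $\W_G(k)$: indeed $y_{U_1} = 1 \neq 0$ in $k$ (characteristic $2$ does not collapse $1$ to $0$). Therefore $\mathbf{y}$ is a nonzero element of $\W_G(k)$ satisfying $\mathbf{y}^2 = 0$, so $\W_G(k)$ is not reduced.

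There is no real obstacle here; all the hard work has been done in constructing $\mathbf{y}$ and identifying its annihilator. The corollary is precisely the observation that the annihilator-calculating vector $\mathbf{y}$ happens to sit inside the very ideal it annihilates, which would have been impossible in a reduced ring.
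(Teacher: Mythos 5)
Your proposal is correct and matches the paper's proof exactly: the paper also deduces $\mbf{y}^2 = \zero$ from $\mbf{y} \in \goth{m}$ together with Theorem~\ref{thm:D2Ann}, and your extra checks (that $y_0 = 0$ and that $y_{U_1} = 1$ makes $\mbf{y} \neq \zero$) are sound, immediate observations the paper leaves implicit.
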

\begin{proof}
Since $\mbf{y} \in \goth{m}$, $\mbf{y}^2 = \mbf{0}$. 
\end{proof}

\begin{cor}\label{cor:notcoh}
The ring $\W_G(k)$ is not coherent. 
\end{cor}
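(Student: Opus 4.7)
The plan is to apply the standard characterization of coherent rings, which says a ring $R$ is coherent if and only if every finitely generated ideal is finitely presented; equivalently, for every $r \in R$, the annihilator ideal $\Ann(r)$ is finitely generated (and the intersection of any two finitely generated ideals is finitely generated). See \cite{Gla89} for this equivalence, which is already cited in the footnote earlier in the paper.

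With this reformulation in hand, the corollary is immediate from the two results we just proved. First, Theorem~\ref{thm:D2Ann} produces an explicit Witt vector $\mbf{y} \in \W_G(k)$ (the one that is $1$ on $U_1$ and on every $T_n$, and $0$ elsewhere) whose annihilator is exactly the maximal ideal $\goth{m}$. Second, Theorem~\ref{thm:D2inftynotnoth} shows that $\goth{m}$ is not finitely generated as an ideal of $\W_G(k)$. Thus $\Ann(\mbf{y}) = \goth{m}$ is a non-finitely-generated annihilator, so $\W_G(k)$ fails the annihilator condition in the coherence criterion, and hence is not coherent.

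Since the two technical inputs have already been established, there is essentially no obstacle here; the proof is a one-line invocation of the characterization from \cite{Gla89} together with Theorem~\ref{thm:D2Ann} and Theorem~\ref{thm:D2inftynotnoth}. I would simply state: \emph{Proof.} By Theorem~\ref{thm:D2Ann}, $\Ann(\mbf{y}) = \goth{m}$, and by Theorem~\ref{thm:D2inftynotnoth}, $\goth{m}$ is not finitely generated. Since every coherent ring has finitely generated annihilators \cite{Gla89}, $\W_G(k)$ cannot be coherent. \qedbox
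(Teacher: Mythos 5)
Your proof is correct and takes essentially the same route as the paper: both invoke the fact that coherent rings have finitely generated annihilator ideals (citing \cite{Gla89}), then combine Theorem~\ref{thm:D2Ann} ($\Ann(\mbf{y}) = \goth{m}$) with Theorem~\ref{thm:D2inftynotnoth} ($\goth{m}$ not finitely generated). No gap here.
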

\begin{proof}
Since coherent rings have finitely generated annihilator ideals \cite{Gla89}, it suffices to show some annihilator ideal is not finitely generated. By Theorem \ref{thm:D2Ann} the ideal $\goth{m}$ is an annihilator ideal and by Theorem \ref{thm:D2inftynotnoth} this ideal is not finitely generated. 
\end{proof}

\end{document}